\theoremstyle{plain}
\newtheorem{theorem}{Theorem}
\newtheorem{lemma}[theorem]{Lemma}
\theoremstyle{definition}
\theoremstyle{remark}
\begin{document}
    
    \author{Hieu D. Nguyen}
    \title{A New Proof of the Prouhet-Tarry-Escott Problem}
    \date{11-22-2014}

    \address{Department of Mathematics, Rowan University, Glassboro, NJ 08028.}
    \email{nguyen@rowan.edu}
    
    \subjclass[2010]{Primary 11}
    \keywords{Prouhet-Tarry-Escott problem, Prouhet-Thue-Morse sequence}
    
    \maketitle

    \begin{abstract}
        The famous Prouhet-Tarry-Escott problem seeks collections of mutually disjoint sets of non-negative integers having equal sums of like powers.  In this paper we present a new proof of the solution to this problem by deriving a generalization of the product generating function formula for the classical Prouhet-Thue-Morse sequence.
    \end{abstract} 

\section{Introduction}

The well-known Prouhet-Tarry-Escott (PTE) problem (\cite{DB},\cite{W2}) seeks $p\geq 2$ sets of non-negative integers $S_0$, $S_1$, ..., $S_{p-1}$ that have equal sums of like powers (ESP) up to degree $M\geq 1$, i.e.
\[
\sum_{n\in S_0} n^m =\sum_{n\in S_1} n^m= ...=\sum_{n\in S_{p-1}} n^m 
\]
for all $m=0,1,...,M$.  In 1851, E. Prouhet \cite{P} gave a solution, but without proof, by partitioning the first $p^{M+1}$ non-negative integers into the sets $S_0,S_1,...,S_{p-1}$ according to the assignment
\[
n\in S_{v_p(n)}
\]
Here, $v_p(n)$ is the generalized Prouhet-Thue-Morse sequence defined by computing the residue of  the sum of digits of $n$ (base $p$):
\[
v_p(n)=\sum_{j=0}^d n_j \mod p
\]
where $n=n_dp^d+...+n_0p^0$ is the base-$p$ expansion of $n$.  When $p=2$, $v(n):=v_2(n)$ generates the classical Prouhet-Thue-Morse sequence: 0,1,1,0,1,0,0,1,.... For example, the two sets
\begin{align*}
S_0=\{0,3, 5, 6, 9,10,12,15\} \\
S_1=\{1,2,4,7,8,11,13,14\}
\end{align*}
defined by $n\in S_{v(n)}$ solves the PTE problem with $p=2$ and $M=3$ since

\[
\begin{array}{lcl}
8 & = & 0^0+3^0+5^0+6^0+9^0+10^0+12^0+15^0 \\
& = & 1^0+2^0+4^0+7^0+8^0+11^0+13^0+14^0 \\ 
\\
60 & = & 0+3+5+6+9+10+12+15 \\
& = & 1+2+4+7+8+11+13+14 \\
\\
620 & = & 0^2+3^2+5^2+6^2+9^2+10^2+12^2+15^2 \\
& = & 1^2+2^2+4^2+7^2+8^2+11^2+13^2+14^2 \\ 
\\
7200 & = & 0^3+3^3+5^3+6^3+9^3+10^3+12^3+15^3 \\
& = & 1^3+2^3+4^3+7^3+8^3+11^3+13^3+14^3
\end{array}
\]
where we define $0^0=1$.

The first published proof of Prouhet's solution was given by D. H. Lehmer \cite{L} who in fact presented a more general construction of ESPs beyond those described by Prouhet's solution.  This was achieved by considering products of polynomials whose coefficients are roots of unity.  In particular, Lehmer defined
\begin{equation}\label{eq:Lehmer-pgf}
F(\theta)=\prod_{m=0}^{M+1} (1+\omega e^{\mu_m \theta}+\omega^2 e^{2\mu_m \theta}+...+\omega^{p-1} e^{(p-1)\mu_m \theta})
\end{equation}
where $\omega$ is a $p$-th root of unity and $\{\mu_0,...,\mu_M\}$ are arbitrary positive integers.  It is clear that $F(x)$ has a zero at $x=0$ of order $M+1$ so that its derivative vanishes up to order $M$, i.e. $F^{(m)}(0)=0$ for $m=0,1,...,M$.  On the other hand, Lehmer expanded $F(x)$ to obtain
\begin{equation} \label{eq:Lehmer-pgf-2}
F(\theta)=\sum_{a_0,...,a_{M}} \omega^{a_0+...+a_{M}}e^{(a_1\mu_0+...+a_M\mu_{M})\theta}
\end{equation}
where $a_0,...,a_{M}$ take on all integers from 0 to $p-1$.  Since 
\[
F^{(m)}(0)=\sum_{a_0,...,a_M} \omega^{a_0+...+a_{M}} (a_0\mu_0+...+a_M\mu_M)^m
\]
Lehmer was able to prove using linear algebra that
\[
\sum_{n\in S_0} n^m =\sum_{n\in S_1} n^m= ...=\sum_{n\in S_{p-1}} n^m 
\]
where he assigned $n=a_0\mu_0+...+a_M\mu_M \in S_k$ if $a_0+...+a_M = k$ mod $p$.  This solves the PTE problem by setting $\mu_m=p^m$ for all $m=0,1,...,M$.  Other proofs of Prouhet's solution have been given by E. M. Wright \cite{W} using multinomial expansion and by J. B. Roberts \cite{R} using difference operators  (see also \cite{W2}).    

Observe that in the case mentioned where $\mu_m=p^m$ for all $m=0,...,M$, then equating (\ref{eq:Lehmer-pgf}) with (\ref{eq:Lehmer-pgf-2}) together with the substitution $x=e^{\theta}$ yields the product generating function formula 
\begin{equation}\label{eq:pgf-root-of-unity}
\prod_{m=0}^{M+1} (1+\omega x^{p^m}+\omega^2 x^{2p^m} +...+\omega^{p-1} x^{(p-1)p^m})=\sum_{n=0}^{p^{M+1}-1} \omega^{v_p(n)}x^{n}
\end{equation}
For $p=2$, equation (\ref{eq:pgf-root-of-unity}) reduces to the classical product generating function formula for the PTM sequence $v(n)$ (see \cite{AS},\cite{B}):
\begin{equation} \label{eq:product-generating-function}
 \prod_{m=0}^{N} (1-x^{2^m}) =\sum_{n=0}^{2^{N+1}-1} (-1)^{v(n)}x^n 
\end{equation}

In this paper, we present a new proof of Prouhet's solution by generalizing (\ref{eq:pgf-root-of-unity}) to polynomials whose coefficients sum to zero while preserving the form of (\ref{eq:product-generating-function}).  This was achieved by observing that the key ingredient in the proof of (\ref{eq:pgf-root-of-unity}) relies on the property that all $p$-th roots of unity sum to zero, namely,
\[
\omega^0+\omega^1+...+\omega^{p-1}=0
\]
where $\omega$ is a primitive $p$-th root of unity.  Towards this end, let $A=(a_0,a_1,...,a_{p-1})$ be a vector consisting of $p$ arbitrary complex values that sum to zero:
\[
a_0+a_1+...+a_{p-1}=0
\]
We define $F_N(x;A)$ to be the polynomial of degree $p^N-1$ whose coefficients belong to $A$ and repeat according to $v_p(n)$, i.e.
\begin{equation} \label{de:F}
F_N(x;A)=\sum_{n=0}^{p^{N}-1} a_{v_p(n)}x^n
\end{equation}
We then prove in Theorem \ref{th:factor-F} that for every positive integer $N$, there exists a polynomial $P_N(x)$ such that
\begin{equation} \label{eq:pgf-mod-p}
F_N(x;A)=P_N(x)\prod_{m=0}^{N-1}(1-x^{p^m})
\end{equation}
For example, if $p=3$ so that $a_0+a_1+a_2=0$, then (\ref{eq:pgf-mod-p}) becomes
\[
a_0+a_1x+a_2x^2 =[a_0+(a_0+a_1)x](1-x)
\]
and
\[
a_0+a_1x+a_2x^2+a_1x^3+a_2x^4+a_0x^5+a_2x^6+a_0x^7+a_1x^8 = [a_0+(a_0+a_1)x+(a_0+a_1)x^3+a_1x^4](1-x)(1-x^3)
\]
for $N=1$ and $N=2$, respectively.  In the case where $p=2$, $a_0=1$, and $a_1=-1$, then $P_N(x)=1$ for all $N$ and therefore (\ref{eq:pgf-mod-p}) reduces to (\ref{eq:product-generating-function}).  

Equation (\ref{eq:pgf-mod-p}) is useful in that it allows us to establish that the polynomial $F_N(x,A)$ has a zero of order $N$ at $x=1$ from which Prouhet's solution follows easily by setting $N=M+1$ and differentiating $F_N(x;A)$ $m$ times as demonstrated in Theorem \ref{th:pte}.

\section{Proof of the Prouhet-Tarry-Escott Problem}

Let $p\geq 2$ be a fixed integer.  We begin with a lemma that describes a recurrence for $F_N(x;A)$ whose proof follows from the fact that
\[
v_p(n+kp^m)=(v_p(n)+k)_p \ \ \ \ (0\leq n <p^m, 0\leq k < p)
\]
where we define $(n)_p = n$ mod $p$.  Moreover, let $A_k$ denote the $k$-th left cyclic shift of the elements of $A$, i.e.
\[
A_k=(a_{(k)_p},a_{(k+1)_p},...,a_{(p-1+k)_p})
\]

\begin{lemma} For any integer $N>1$, we have
\begin{equation} \label{eq:recurrence-F}
F_N(x;A)=F_{N-1}(x;A_0)+x^{p^{N-1}}F_{N-1}(x;A_1)+...+x^{(p-1)p^{N-1}}F_{N-1}(x;A_{p-1})
\end{equation}
\end{lemma}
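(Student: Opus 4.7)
The plan is to unfold the definition of $F_N(x;A)$ and partition the index set $\{0,1,\ldots,p^N-1\}$ into $p$ arithmetic blocks determined by the leading base-$p$ digit.

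First I would write every $n$ with $0\le n<p^N$ uniquely as $n=n'+kp^{N-1}$ with $0\le n'<p^{N-1}$ and $0\le k\le p-1$. Splitting the defining sum
\[
F_N(x;A)=\sum_{n=0}^{p^N-1} a_{v_p(n)}\,x^n
\]
along this decomposition and factoring the monomial $x^{kp^{N-1}}$ out of the inner block gives
\[
F_N(x;A)=\sum_{k=0}^{p-1} x^{kp^{N-1}} \sum_{n'=0}^{p^{N-1}-1} a_{v_p(n'+kp^{N-1})}\, x^{n'}.
\]

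Next I would invoke the identity $v_p(n'+kp^{N-1})=(v_p(n')+k)_p$ stated just before the lemma (which holds because adding $k$ in the leading digit slot simply increments the digit sum by $k$, and the residue mod $p$ is what matters). This turns the inner sum into $\sum_{n'=0}^{p^{N-1}-1} a_{(v_p(n')+k)_p}\, x^{n'}$. Finally I would observe that by the definition of the left cyclic shift $A_k=(a_{(k)_p},a_{(k+1)_p},\ldots,a_{(p-1+k)_p})$, the $j$-th entry of $A_k$ equals $a_{(j+k)_p}$, so the inner sum is exactly $F_{N-1}(x;A_k)$. Assembling the pieces produces the claimed recurrence.

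There is no genuine obstacle here; the only thing to watch is that the cyclic-shift indexing in the definition of $A_k$ lines up with the index $(v_p(n')+k)_p$ that comes out of the digit-sum identity, which it does once one reads off that the $j$-th component of $A_k$ is $a_{(j+k)_p}$. Everything else is a bookkeeping manipulation of a single finite sum.
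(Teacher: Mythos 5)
Your proposal is correct and follows essentially the same route as the paper: split the sum over $0\le n<p^N$ into $p$ blocks indexed by the leading digit $k$, factor out $x^{kp^{N-1}}$, apply the identity $v_p(n'+kp^{N-1})=(v_p(n')+k)_p$, and identify each inner sum with $F_{N-1}(x;A_k)$ via the cyclic shift. Nothing is missing.
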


\begin{proof} We have
\begin{align*}
F_{N}(x;A) & =\sum_{n=0}^{p^{N}-1} a_{v_p(n)}x^n \\
& = \sum_{n=0}^{p^{N-1}-1} a_{v_p(n)}x^n +\sum_{n=p^{N-1}}^{2p^{N-1}-1} a_{v_p(n)}x^n+...+\sum_{n=(p-1)p^{N-1}}^{p^{N}-1} a_{v_p(n)}x^n \\
& = \sum_{n=0}^{p^{N-1}-1} a_{v_p(n)}x^n +x^{p^{N-1}}\sum_{n=0}^{p^{N-1}-1} a_{v_p(n+p^{N-1})}x^n+...+x^{(p-1)p^{N-1}}\sum_{n=0}^{p^{N-1}-1} a_{v_p(n+(p-1)p^{N-1})}x^n \\
& = \sum_{n=0}^{p^{N-1}-1} a_{v_p(n)}x^n +x^{p^{N-1}}\sum_{n=0}^{p^{N-1}-1} a_{(v_p(n)+1)_p}x^n+...+x^{(p-1)p^{N-1}}\sum_{n=0}^{p^{N-1}-1} a_{(v_p(n)+p-1)_p}x^n \\
& = F_{N-1}(x;A_0)+x^{p^{N-1}}F_{N-1}(x;A_1)+...+x^{(p-1)p^{N-1}}F_{N-1}(x;A_{p-1})
\end{align*}
\end{proof}

\noindent For example, let $p=3$ and $A=(a_0,a_1,a_2)$.  Then
\begin{align*}
F_1(x;A) & = a_0+a_1x +a_2x^2\\
F_2(x;A) & = a_0+a_1x+a_2x^2+a_1x^3+a_2x^4+a_0x^5+a_2x^6+a_0x^7+a_1x^8 \\
& = F_1(x;A_0)+x^3F_1(x;A_1)+x^6F_1(x;A_2)
\end{align*}

Next, define a recursive sequence of vectors consisting of unknown constants as follows:
\[
C_1=(c_{0},...,c_{p-2})
\]
and for $N>1$, 
\begin{equation} \label{eq:recurrence-C}
C_N  = C_{N-1}(0)\# C_{N-1}(1)\# ... \# C_{N-1}(p-2) 
\end{equation}
where $\#$ denotes concatenation of vectors and
\[
C_{N-1}(k)=(c_{j+kp^{N-1}}:c_j\in C_{N-1})
\]
for $k=0,1,...,p-2$.  For example, if $p=3$, then
\begin{align*}
C_1 & =(c_0,c_1) \\
C_2 & =C_1(0)\#C_1(1)=(c_0,c_1,c_3,c_4) \\
C_3 & =C_2(0)\#C_2(1)=(c_0,c_1,c_3,c_4,c_9,c_{10},c_{12},c_{13})
\end{align*}
Note that if $p=2$, then $C_N=(c_0)$ for all $N\geq 1$.

Moreover, define a sequence of polynomials $P_N(x;C_N)$ recursively as follows:
\[
P_1(x;C_1)=c_0+c_1x+...+c_{p-2}x^{p-2}
\]
and for $N>1$,
\begin{equation} \label{eq:recurrence-P}
P_N(x;C_N)= P_{N-1}(x;C_{N-1}(0))+x^{p^{N-1}}P_{N-1}(x;C_{N-1}(1))+ ... +x^{(p-2)p^{N-1}}P_{N-1}(x;C_{N-1}(p-2)) 
\end{equation}

We are now ready to prove that $F_N(x;A)$ has the following factorization.

\begin{theorem} \label{th:factor-F}
Let $N$ be a positive integer.  There exists a polynomial $P_N(x;C_N)$ such that
\begin{equation} \label{eq:ptm-polynomial}
F_N(x;A)=P_{N}(x;C_N)\prod_{m=0}^{N-1}(1-x^{p^m})
\end{equation}
\end{theorem}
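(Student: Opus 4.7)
The plan is to induct on $N$, building $P_N(x;C_N)$ from $P_{N-1}$ by mirroring the lemma's recurrence for $F_N$ against the defining recurrence \eqref{eq:recurrence-P} for $P_N$. For the base case $N=1$, the hypothesis $a_0 + \ldots + a_{p-1} = 0$ gives $F_1(1;A) = 0$, so $(1-x)$ divides $F_1(x;A)$; explicit polynomial division produces the quotient $P_1(x;C_1) = \sum_{k=0}^{p-2} c_k x^k$ with $c_k := a_0 + a_1 + \ldots + a_k$.

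For the inductive step, every cyclic shift $A_k$ of $A$ still has entries summing to zero, so the inductive hypothesis applies and yields $F_{N-1}(x;A_k) = P_{N-1}(x;\widetilde C^{(k)}) \prod_{m=0}^{N-2}(1-x^{p^m})$ for some coefficient vector $\widetilde C^{(k)}$. Substituting into the lemma's recurrence $F_N(x;A) = \sum_{k=0}^{p-1} x^{kp^{N-1}} F_{N-1}(x;A_k)$ and factoring out $\prod_{m=0}^{N-2}(1-x^{p^m})$ reduces the theorem to showing that $(1 - x^{p^{N-1}})$ divides $\sum_{k=0}^{p-1} x^{kp^{N-1}} P_{N-1}(x;\widetilde C^{(k)})$. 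Setting $y := x^{p^{N-1}}$, this divisibility is equivalent to the identity $\sum_{k=0}^{p-1} P_{N-1}(x;\widetilde C^{(k)}) = 0$, which I expect to be the heart of the argument.

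To verify this identity, sum the inductive hypothesis over $k$: the common factor $\prod_{m=0}^{N-2}(1-x^{p^m})$ is nonzero, so it suffices to check $\sum_{k=0}^{p-1} F_{N-1}(x;A_k) = 0$. For each $n$ the numbers $\{a_{(v_p(n)+k)_p} : 0 \leq k \leq p-1\}$ are a permutation of $\{a_0, \ldots, a_{p-1}\}$ and hence sum to zero, proving the identity coefficient by coefficient.

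Once divisibility is established, one step of division in $y$ produces a quotient of the form $P_N(x;C_N) = \sum_{k=0}^{p-2} y^k P_{N-1}\bigl(x; \widetilde C^{(0)} + \widetilde C^{(1)} + \ldots + \widetilde C^{(k)}\bigr)$, using linearity of $P_{N-1}$ in its coefficient vector. The main obstacle is the bookkeeping step of matching this quotient against the recursive definition \eqref{eq:recurrence-P}: it requires identifying the sub-block $C_{N-1}(k)$ of $C_N$ with the cumulative sum $\widetilde C^{(0)} + \widetilde C^{(1)} + \ldots + \widetilde C^{(k)}$, which simultaneously specifies the entries of $C_N$ inductively in terms of $A$ and confirms that the resulting $P_N(x;C_N)$ is exactly the polynomial postulated by \eqref{eq:recurrence-P}, closing the induction.
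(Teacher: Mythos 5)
Your proposal is correct and follows essentially the same route as the paper: induction on $N$, the same base case with partial sums $c_k=a_0+\cdots+a_k$, and an inductive step combining the recurrence for $F_N$ with the hypothesis applied to each cyclic shift $A_k$, where your key identity $\sum_{k}F_{N-1}(x;A_k)=0$ is exactly the paper's consistency condition $B_{p-2}=-A_{p-1}$ and your cumulative sums $\widetilde C^{(0)}+\cdots+\widetilde C^{(k)}$ are the paper's vectors $B_k$. The only cosmetic difference is that you phrase the step as divisibility by $1-x^{p^{N-1}}$ followed by explicit division in $y=x^{p^{N-1}}$, whereas the paper equates coefficients of powers of $x^{p^{N-1}}$ and row-reduces the resulting linear system to the same quotient; the final bookkeeping you flag is immediate because the blocks $C_{N-1}(0),\dots,C_{N-1}(p-2)$ of $C_N$ are disjoint sets of free unknowns.
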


\begin{proof} We prove (\ref{eq:ptm-polynomial}) by induction.  First, define $Q_N(x)=\prod_{m=0}^{N-1}(1-x^{p^m})$ so that for $N>1$,
\begin{equation} \label{eq:recurrence-Q}
Q_N(x)=Q_{N-1}(x)(1-x^{p^{N-1}})
\end{equation}
To establish the base case $N=1$, we expand $F_1(x;A)=P_{1}(x;C_1)Q_1(x)$ to obtain
\[
a_0+a_1x+...+a_{p-1}x^{p-1}=c_0+(c_1-c_0)x+...+(c_{p-2}-c_{p-1})x^{p-2}-c_{p-2}x^{p-1}
\]
Then equating coefficients yields the system of equations
\begin{align*}
c_0 & =a_0 \\
c_1-c_{0}  & =a_1 \\
& \ ... \\
c_{p-2}-c_{p-1} & = a_{p-2} \\
-c_{p-2} & = a_{p-1}
\end{align*}
Since $a_0+a_1+...+a_{p-1}=0$, this system is consistent with solution $c_m=\sum_{k=0}^m a_m$ for $m=0,1,...,p-2$ where $c_{p-2}=a_0+...+a_{p-2}=-a_{p-1}$.  Thus, $P_1(x;C_1)$ is given by
\[
P_{1}(x;C_1)=\sum_{m=0}^{p-2}\left(\sum_{k=0}^m a_k\right)x^m
\]
Note that if $p=2$, then $P_1(x;C_1)=a_0$.

Next, assume there exists a polynomial $P_{N-1}(x;C_{N-1})$ that solves
\[
F_{N-1}(x;A)=P_{N-1}(x;C_{N-1})Q_{N-1}(x)
\]
To prove that there exists a solution $P_N(x;C_N)$ for
\begin{equation} \label{eq:FPQ}
F_N(x;A)=P_N(x;C_N)Q_N(x)
\end{equation}
we expand (\ref{eq:FPQ}) using recurrences (\ref{eq:recurrence-F}), (\ref{eq:recurrence-P}), and (\ref{eq:recurrence-Q}):
\begin{equation} \label{eq:FPQ-expand}
\sum_{k=0}^{p-1}x^{kp^{N-1}}F_{N-1}(x;A_{p-1})=\left[\sum_{k=0}^{p-2}x^{kp^{N-1}}P_{N-1}(x;C_{N-1}(k)) \right]Q_{N-1}(x)(1-x^{p^{N-1}})
\end{equation}
We then equate coefficients in (\ref{eq:FPQ-expand}) corresponding to the terms $x^{kp^{N-1}}$.  This yields the system of equations
\begin{align*}
F_{N-1}(x;A_0) & = P_{N-1}(x;C_{N-1}(0))Q_{N-1}(x) \\
F_{N-1}(x;A_1) & = [P_{N-1}(x;C_{N-1}(1))-P_{N-1}(x;C_{N-1}(0))]Q_{N-1}(x) \\
& \ ... \\
F_{N-1}(x;A_{p-2}) & = [P_{N-1}(x;C_{N-1}(p-2))-P_{N-1}(x;C_{N-1}(p-3))]Q_{N-1}(x) \\
F_{N-1}(x;A_{p-1}) & = -P_{N-1}(x;C_{N-1}(p-2))Q_{N-1}(x)
\end{align*}
Now, each equation above corresponding to $F_{N-1}(x;A_k)$ for $k=1,...,p-2$ can be replaced by one obtained by summing all equations up to $k$, namely
\[
F_{N-1}(x;B_k)  = P_{N-1}(x;C_{N-1}(k))Q_{N-1}(x)
\]  
where $B_k=A_0+...+A_k$ is defined by vector summation.  This yields the equivalent system of equations
\begin{align*}
F_{N-1}(x;B_0) & = P_{N-1}(x;C_{N-1}(0))Q_{N-1}(x) \\
F_{N-1}(x;B_1) & = P_{N-1}(x;C_{N-1}(1))Q_{N-1}(x) \\
& \ ... \\
F_{N-1}(x;B_{p-2}) & = P_{N-1}(x;C_{N-1}(p-2))Q_{N-1}(x) \\
F_{N-1}(x;A_{p-1}) & = -P_{N-1}(x;C_{N-1}(p-2))Q_{N-1}(x)
\end{align*}
By induction, each of the equations above corresponding to $F_{N-1}(x;B_k)$ has a solution in $C_{N-1}(k)$.  Moreover, the last equation corresponding to $F_{N-1}(x;A_{p-1})$ is equivalent to the equation corresponding to $F_{N-1}(x;B_{p-2})$ since $B_{p-2}=A_0+...+A_{p-2}=-A_{p-1}$.  This proves that  (\ref{eq:FPQ}) has a solution in $C_N$ because of (\ref{eq:recurrence-C}).
\end{proof}

We now present our proof of the Prouhet-Tarry-Escott problem.

\begin{theorem}[\cite{L},\cite{P}-\cite{W2}] \label{th:pte}
Let $M$ be a positive integer, $L=p^{M+1}$, and $S_0$, $S_1$,...,$S_{p-1}$ a partition of $\{0,1,...,L-1\}$ defined by
\[
n\in S_{v_p(n)}
\]  
for $0\leq n \leq L-1$.  Then $S_0$, $S_1$,...,$S_{p-1}$ have equal sums of like powers of degree $M$, i.e.
\[
\sum_{n\in S_0} n^m =\sum_{n\in S_1} n^m= ...=\sum_{n\in S_{p-1}} n^m 
\]
for all $m=0,1,...,M$. 
\end{theorem}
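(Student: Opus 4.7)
The plan is to deduce Theorem \ref{th:pte} directly from the factorization provided by Theorem \ref{th:factor-F}. Setting $N = M+1$ (so that $L = p^N$), I would apply that theorem to write
\[
F_{M+1}(x;A) = P_{M+1}(x;C_{M+1})\prod_{m=0}^{M}(1-x^{p^m}).
\]
Every factor $1 - x^{p^m}$ on the right vanishes at $x = 1$, so the product has a zero of order at least $M+1$ there. Consequently $F_{M+1}(x;A)$ together with each of its first $M$ derivatives vanishes at $x = 1$, for every admissible $A$ with $a_0 + \cdots + a_{p-1} = 0$.

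To convert this vanishing into statements about power sums, I would substitute $x = e^\theta$ in the spirit of Lehmer's formulation in (\ref{eq:Lehmer-pgf}). The composition preserves the order of the zero since $e^\theta - 1 = \theta + O(\theta^2)$, so
\[
F_{M+1}(e^\theta;A) = \sum_{n=0}^{L-1} a_{v_p(n)}\,e^{n\theta} = \sum_{m\geq 0}\frac{\theta^m}{m!}\sum_{n=0}^{L-1} a_{v_p(n)}\,n^m
\]
has a zero of order $M+1$ at $\theta = 0$. Reading off Taylor coefficients then yields
\[
\sum_{n=0}^{L-1} a_{v_p(n)}\,n^m = 0 \qquad (m = 0, 1, \ldots, M).
\]

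Finally, I would partition the index set by residue: since $n \in S_{v_p(n)}$ by definition of the partition, the preceding identity groups as
\[
\sum_{k=0}^{p-1} a_k \sum_{n\in S_k} n^m = 0,
\]
and this holds for every choice of $A$ satisfying $a_0 + \cdots + a_{p-1} = 0$. Specializing, for each $j = 1,\ldots,p-1$, to $a_0 = 1$, $a_j = -1$, and the remaining entries zero, one obtains $\sum_{n \in S_0} n^m = \sum_{n \in S_j} n^m$ for all $m = 0,1,\ldots,M$, which is the required chain of equalities.

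The conceptual difficulty has already been absorbed into Theorem \ref{th:factor-F}, so no serious obstacle remains. The only slightly delicate point is justifying that $x = e^\theta$ preserves the order of the zero at $x = 1$ (so that the Taylor expansion argument is legitimate); this is immediate from $e^\theta - 1 = \theta + O(\theta^2)$, and an alternative route is to differentiate $F_{M+1}(x;A)$ directly at $x = 1$ and invert the resulting triangular Stirling-number system to recover the power sums from the vanishing falling-factorial moments.
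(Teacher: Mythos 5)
Your proposal is correct and follows essentially the same route as the paper: invoke Theorem \ref{th:factor-F} with $N=M+1$ to get a zero of order $M+1$ at $x=1$, substitute $x=e^\theta$, read off the vanishing Taylor coefficients as the linear relations $\sum_k a_k s_k(m)=0$, and specialize $A$ to vectors with one entry $1$, one entry $-1$, and the rest $0$. The only cosmetic difference is that you compare each $S_j$ to $S_0$ while the paper takes arbitrary pairs $j,k$, and you make explicit the (correct) remark that $x=e^\theta$ preserves the order of the zero, which the paper leaves implicit.
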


\begin{proof} Denote by $s_k(m)=\sum_{n\in S_{k}} n^m$. Let $A=(a_0, a_1,...,a_p-1)$ be a vector of $p$ arbitrary complex values that sum to zero: $a_0+a_1+...+a_{p-1}=0$.  Set $N=M+1$ and define $F_{N}(x;A)$ as in (\ref{de:F}).  Next, substitute $x=e^{\theta}$ into $F_N(x;A)$ and compute the $m$-th derivative of $G_{N}(\theta):=F_{N}(e^{\theta};A)$ at $\theta=0$.  Then on the one hand, we have from the standard rules of differentiation that
\begin{align*}
G^{(m)}_{N}(0) & =\sum_{n=0}^{p^N-1}n^m a_{v_p(n)} \\
& = \sum_{n\in S_0}n^m a_0 +...+ \sum_{n\in S_{p-1}}n^m a_{p-1} \\
& = a_0s_0(m)+...+a_{p-1}s_{p-1}(m)
\end{align*}
On the other hand, we have from (\ref{eq:ptm-polynomial}) that $G_{N}(\theta)$ has a zero of order $N$ at $\theta=0$. It follows that
\[
G^{(m)}_{N}(0)=0
\]
for $m=0,1,...,N-1$.  Thus, 
\begin{equation} \label{eq:linear-equation}
a_0s_0(m)+...+a_{p-1}s_{p-1}(m)=0
\end{equation}
Now, recall that the values $a_0$, $a_1$, ..., $a_{p-1}$ can be chosen arbitrarily as long as they sum to zero.  Therefore, we choose them as follows: for any pair of distinct non-negative integers $j,k$ satisfying $0\leq j,k \leq p-1$, set $a_j=1$, $a_k=-1$, and $a_l=0$ for all $l\neq j,k$. Then (\ref{eq:linear-equation}) reduces to
\[
s_j(m)-s_k(m)=0
\]
or equivalently, $s_j(m)=s_k(m)$.  But since this holds for all distinct $j,k$, we conclude that
\[
s_0(m)=s_1(m)=...=s_{p-1}(m)
\]
for $m=0,1,...,M$ as desired.
\end{proof}

We conclude by explaining our motivation for studying the polynomials $F_N(x;A)$.  In \cite{NC}, it was shown that these polynomials arise in radar as ambiguity functions of pulse trains generated by complementary codes that repeat according to the Prouhet-Thue-Morse sequence.  Prouhet's solution was then used to demonstrate that these pulse trains, called complementary PTM pulse trains, are tolerant of Doppler shifts due to a moving target by establishing that their Taylor series coefficients vanish up to order $M$.

\end{document}